\def\ot{\otimes}
\def\ncp#1#2{#1\langle #2\rangle}
\def\ep{\epsilon}
\def\scal#1#2{\langle #1\bv#2 \rangle}
\def\bv{\mid}
\newcommand{\beqa}{\begin{eqnarray}}
\newcommand{\eeqa}{\end{eqnarray}}
\newtheorem{definition}{Definition}[section]
\newtheorem{proposition}[definition]{Proposition}
\newtheorem{theorem}[definition]{Theorem}
\newtheorem{lemma}[definition]{Lemma}
\newtheorem{remark}[definition]{Remark}
\newtheorem{example}[definition]{Example}
\newenvironment{proof}{{\rm Proof.}\noindent}{\hfill$\square$}
\def\B{\mathcal{B}}
\def\H{\mathcal{H}}
\def\KK{\mathbb{K}}
\def\NN{\mathbb{N}}
\def\Cal #1{{\mathcal #1}}
\long\def\ignore#1{}
\newcommand{\bea}{\begin{eqnarray}}
\newcommand{\eea}{\end{eqnarray}}
\newcommand{\redtext}[1]{\textcolor{red}{[[#1]]}}
\begin{document}
\parindent0pt
\noindent
\vskip 1cm
\title{A combinatorial non-commutative Hopf algebra of graphs}
\author{
{\sf   G\'erard H. E. Duchamp${}^{a}$\thanks{e-mail: ghed@lipn.univ-paris13.fr}, }
{\sf   Lo\"\i c Foissy${}^{b}$\thanks{e-mail: foissy@lmpa.univ-littoral.fr},}
{\sf   Nguyen Hoang-Nghia${}^{a}$\thanks{e-mail: nguyen.hoang@lipn.univ-paris13.fr}, }
\\
{\sf   Dominique Manchon${}^{c}$\thanks{e-mail: manchon@math.univ-bpclermont.fr} }
 and
{\sf Adrian Tanasa${}^{a,d}$\thanks{e-mail: adrian.tanasa@ens-lyon.org}}}
\date{\today}
\maketitle

\vskip-1.5cm

\vspace{2cm}
\thispagestyle{empty}
\begin{abstract}
\begin{center}
\begin{minipage}{.8\textwidth}
A 
non-commutative, planar,  Hopf algebra of rooted trees
was proposed by one of the authors in \cite{Foi02}.
In this paper we propose such a non-commutative Hopf algebra for graphs. 
In order to define a non-commutative product 
we use a quantum field theoretical (QFT) idea, namely the one of introducing 
discrete scales on each edge of the graph (which, within the QFT framework, corresponds to 
energy scales of the associated propagators).
\end{minipage}
\end{center}
\end{abstract}

\vspace{2cm}
{\it AMS classification:} 05E99, 05C99, 16T05, 57T05 

{\it Keywords:} noncommutative Hopf algebras, graphs, discrete scales

\newpage
\section{Introduction}



The Hopf algebra of rooted forests first appeared in the work of A. D\"ur \cite{D86} (and its group of characters, known as the \textsl{Butcher group},  appeared even earlier in the work of J. Butcher in numerical analysis \cite{B72}). It has been rediscovered by D. Kreimer in the context of quantum field theory \cite{K98}, see also \cite{Brou00}. A noncommutative version, using ordered forests of planar trees, has been discovered independently by L. Foissy \cite{Foi02} and R. Holtkamp \cite{H03}. Remarkably enough, this Hopf algebra is self-dual. Commutative Hopf algebras of graphs have been introduced and studied by A. Connes and D. Kreimer \cite{CK98, CK00, CK01}, as a powerful algebraic tool unveiling the combinatorial structure of renormalization.\\

Inspired by constructive quantum field theory \cite{book-rivasseau}, 
we propose in this article a noncommutative version of a Hopf algebra of graphs, by putting a total order on the set of edges. This can be visualized by putting pairwise distinct decorations on each edge, where the decorations take values in the positive integers (or even in any totally ordered infinite set). We prove that the vector space freely generated by these \textsl{totally assigned graphs} (TAGs) is a Hopf algebra. The product is given by the disjoint union of graphs with the ordinal sum order on the edges (see Formula \eqref{eq:product}), and the coproduct is given by Formula \eqref{eq:coprod}, involving subgraphs and contracted graphs.
\\

It is interesting to notice that what we call here TAGs have already been analyzed, 
from a completely different perspective (the travelling salesman problem), 
by O. Boruvka, already in 1926 (see \cite{Bor26}). 
He proved that the shortest spanning tree of such a graph is unique\footnote{Note however that within 
this travelling salesman context the decoration associated to a self-loop (known as a {\it tadpole edge} in quantum 
field theoretical language) is zero, while in our context one can have strictly positive integers associated to such
self-loops.}.
Moreover, the same problem was solved through several simpler explicit constructions
by the celebrated Kruskal algorithm \cite{Kru55}. 
\\

Let us mention here that, throughout this paper, we do not deal with graphs which 
are necessarily 1-particle-irreducible (\textsl{i. e.} bridge-less). Moreover we do not consider
in this paper \textsl{external edges}, as it is done in quantum field theory.

\section{Why discrete scales?}



As already announced above, the idea of decorating the edges of a graph with discrete scales 
comes from quantum field theory, or more precisely 
from the multi-scale analysis technique used in perturbative and in constructive 
renormalization
 (see Vincent Rivasseau's book \cite{book-rivasseau}).

In quantum field theory each edge of a graph is associated to a {\it propagator}
$C=1/H$ (which, in elementary particle physics represents a particle).
Introducing discrete scales comes to a ``slicing'' of the propagator
\bea  C &=& \int_0^\infty  e^{- \alpha H}  d \alpha \; ,
\;   \sum_{i=0}^{\infty}  C^{i}   \label{decoab1}  \\
C^{a} &=&  \int_{M^{-2a}}^{M^{-2(a-1)}} e^{- \alpha H}  d \alpha \; ,
\; C^{0} =  \int_{1}^{\infty} e^{- \alpha H}  d \alpha .
\eea
When some discrete integer $a$ is associated to a given edge, this 
means that the propagator assigned to this edge lies within a given energy scale.
One thus introduces more information 
(replacing graphs by ``assigned graphs'') which yields in turn some refinement of the analysis, as we will explain here.\\

When integrating over the energy scales of the internal propagators in a Feynman graph in quantum field 
theory, one obtains
the {\it Feynman integral} associated to the respective graphs.
Usually, these integrals are divergent. This is when {\it renormalization} comes in,
subtracting (when possible) the divergent parts of these Feynman integrals, in 
a self-consistent way (see again Vincent Rivasseau's book \cite{book-rivasseau} or any other 
textbook on renormalization).
Nevertheless, these divergences only appear for high energies 
(the so-called {\it ultraviolet regime})\footnote{Divergences for low energies
(the {\it infrared regime}) can also appear in quantum field theory, but 
one can deal with this type of divergences in a different way. This lies outside the 
purpose of this section.}, 
which corresponds, within the multi-scale formalism, to 
the case when all the integer scales associated to the internal edges are higher then the edges 
associated to the external edges (see again Vincent Rivasseau's book \cite{book-rivasseau} for details).

When dealing with this divergence subtraction (the subtraction of the so-called 
``counterterms"), an important ``technical'' 
complication is given by the issue of ``overlapping divergences'', which 
is given by overlapping subgraphs which lead, independently, to divergences.
This problem is solved in an elegant way within the multi-scale analysis, where all
subgraphs leading to divergences are either disjoint or nested.

Let us also emphasize that the multi-scale renormalization technique 
splits the counterterms into two categories: ``useful" and ''useless"
counterterms (the useful ones being the ones corresponding to subgraphs where 
all the integer scales associated to the internal edges are higher then the edges 
associated to the external edges). This refining is not possible 
without the scale decoration of edges; furthermore, it also solves another 
major problem
of renormalization, the so-called ``renormalon problem"
(which is an issue when one wants to sum over the contribution of 
each term in perturbation theory).

\medskip

This versatile technique of multi-scale analysis 
was successfully applied for scalar quantum field theory renormalization
(see again \cite{book-rivasseau}), 
the condensed matter case  \cite{BG1},\cite{FT1},\cite{Rivasseau:2011ri},  renormalization of 
scalar quantum field theory on the non-commutative Moyal space
(see  \cite{GMRT}, \cite{4men}, \cite{propa}, \cite{GW} and \cite{gn})
and recently to the renormalization of quantum gravity tensor models \cite{BGR},\cite{COR}.

\medskip

The combinatorics of the multi-scale renormalization was encoded in a 
Hopf algebraic framework in \cite{KRT12}. As already announced above,
the Hopf algebraic setting of \cite{KRT12} is commutative, 
and the assigned graphs designed there can have equal scale integers for several edges
of the same graph.

\section{Non-commutative graph algebra structure}\label{sec:alg}


In this section 
we define the space of totally assigned graphs (TAG) and a non-commutative 
algebra structure on this space. 


\begin{definition}
A {\bf totally ordered scale assignment} $\mu$ for a graph $\Gamma$ is 
a total order on the set $E(\Gamma)$ of edges 
of $\Gamma$.
\end{definition}
It will be convenient to visualize the total order $\mu$ by choosing a compatible labelling, i.e. an injective increasing map from $\big(E(\Gamma),\mu\big)$ into $\mathbb N^*=\{1,2,3,\ldots\}$. There is of course an infinite number of possible labellings. 
The unique such map with values in $\{1,\ldots,|E(\Gamma)]\}$ will be called 
the \textsl{standard labelling} associated with $\mu$.

\begin{example} An example of a totally ordered scale assignment with nonstandard labelling is given in Fig. 
\ref{fig:TAG}.

\begin{figure}[!ht]
\begin{center}
\includegraphics[scale=1.5]{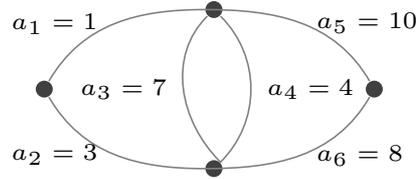}
\caption{A graph with a totally ordered scale assignment.}
\label{fig:TAG}
\end{center}
\end{figure}
\end{example}

\begin{definition}
A {\bf totally assigned graph} (TAG) is a pair  $(\Gamma,\mu)$ formed 
by a graph $\Gamma$ (not necessarily connected), together with a totally ordered scale assignment $\mu$.
\end{definition}


\medskip

Consider now a field $\KK$ of characteristic $0$, and let ${\Cal H}$ be 
the $\KK$- vector space freely spanned by TAGs. 
The {\bf product} $m$ on $\H$ is given by:
\begin{equation}\label{eq:product}
m\big((\Gamma_1,\mu),(\Gamma_2,\nu)\big)=(\Gamma_1,\mu)\cdot(\Gamma_2,\nu):=(\Gamma_1\sqcup\Gamma_2,\mu\sqcup\nu),
\end{equation}
where $\Gamma_1\sqcup\Gamma_2$ is the disjoint union of the two graphs,
and where $\mu\sqcup\nu$ is the \textsl{ordinal sum order}, i.e. the unique total order on $E(\Gamma_1)\sqcup E(\Gamma_2)$ 
which coincides with $\mu$ (resp. $\nu$) on $\Gamma_1$ (resp. $\Gamma_2$), and 
such that $e_1<e_2$ for any $e_1\in\Gamma_1$ and $e_2\in\Gamma_2$. 
Although the disjoint union of graphs is commutative, 
the product is not because the total orders $\mu\sqcup\nu$ and $\nu\sqcup\mu$ are different
(see also Remark \ref{remarca} below).
Associativity is however obvious. The empty TAG is the empty graph, denoted by $1_{\H}$.

\begin{figure}[!ht]
\centering
        \begin{subfigure}[b]{0.3\textwidth}
                \centering
\includegraphics[scale=1.3]{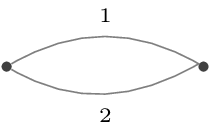}
\caption{The TAG $(\Gamma_1,\mu_1)$.}\label{fig:TAG1}
 \end{subfigure}
 \hspace{1cm}
 \begin{subfigure}[b]{0.3\textwidth}
 \centering
 \includegraphics[scale=1.1]{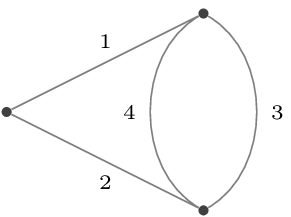}
\caption{The TAG $(\Gamma_2,\mu_2)$.}\label{fig:TAG2}
 \end{subfigure}
\caption{Two examples of TAGs.} \label{fig:expleTAG}
\end{figure}

\begin{example}
Let $(\Gamma_1,\mu_1)$ and $(\Gamma_2,\mu_2)$ be the two graphs in Fig. \ref{fig:expleTAG}.

One has
\begin{equation*}
\label{exemplu}
m\big ((\Gamma_1,\mu_1),(\Gamma_2,\mu_2)\big) = \parbox{7cm}{\includegraphics[scale=1]{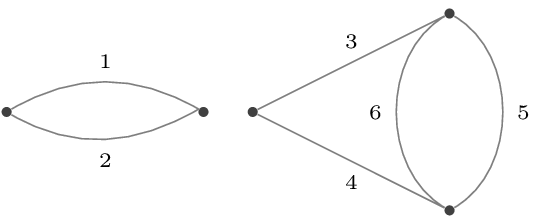}
}.
\end{equation*}
\end{example}
Summing up:
\begin{proposition}
$(\H,m,1_{\H})$ is an associative unitary algebra.
\end{proposition}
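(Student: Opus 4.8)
The plan is to verify the two algebra axioms directly from the definition of the product in Formula \eqref{eq:product}, since the underlying construction is essentially transparent once one unwinds the bookkeeping of total orders. First I would record that $\H$ is a $\KK$-vector space freely spanned by TAGs, so it suffices to check the axioms on basis elements and extend bilinearly; the product $m$ is by construction $\KK$-bilinear. Then I would establish associativity: given three TAGs $(\Gamma_1,\mu_1)$, $(\Gamma_2,\mu_2)$, $(\Gamma_3,\mu_3)$, both $\big((\Gamma_1,\mu_1)\cdot(\Gamma_2,\mu_2)\big)\cdot(\Gamma_3,\mu_3)$ and $(\Gamma_1,\mu_1)\cdot\big((\Gamma_2,\mu_2)\cdot(\Gamma_3,\mu_3)\big)$ have underlying graph $\Gamma_1\sqcup\Gamma_2\sqcup\Gamma_3$, because disjoint union of graphs is associative. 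For the order, I would observe that both bracketings produce the same total order on $E(\Gamma_1)\sqcup E(\Gamma_2)\sqcup E(\Gamma_3)$: it restricts to $\mu_i$ on each $E(\Gamma_i)$, and it places every edge of $\Gamma_i$ before every edge of $\Gamma_j$ whenever $i<j$. This follows since the ordinal sum operation on total orders is itself associative — a quick check that the order $(\mu_1\sqcup\mu_2)\sqcup\mu_3$ and $\mu_1\sqcup(\mu_2\sqcup\mu_3)$ agree on each of the three pairs of ``cross'' comparisons.

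Next I would check the unit axiom. The empty TAG $1_{\H}$ has empty edge set, so for any TAG $(\Gamma,\mu)$ one has $\Gamma\sqcup\emptyset=\emptyset\sqcup\Gamma=\Gamma$ as graphs, and the ordinal sum order $\mu\sqcup\emptyset$ (resp. $\emptyset\sqcup\mu$) is just $\mu$, since there are no new edges and no nontrivial cross comparisons to impose. Hence $1_{\H}\cdot(\Gamma,\mu)=(\Gamma,\mu)\cdot 1_{\H}=(\Gamma,\mu)$, and by bilinearity $1_{\H}$ is a two-sided unit on all of $\H$. Together with associativity and bilinearity this gives that $(\H,m,1_{\H})$ is an associative unitary algebra.

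Honestly, there is no serious obstacle here: the only thing that requires a moment's care is making the claim ``the ordinal sum of total orders is associative'' precise, i.e. checking that in both bracketings an edge of $\Gamma_i$ and an edge of $\Gamma_j$ with $i\neq j$ are compared the same way. I would handle this by a short case analysis on the pairs $(1,2)$, $(1,3)$, $(2,3)$, noting that in each case both orders declare the $\Gamma_i$-edge smaller when $i<j$. Everything else is immediate from the corresponding properties of disjoint union of sets and of graphs. One could also phrase the whole proof more slickly by saying that TAGs form the objects of (the free monoid on) a monoidal structure, but the hands-on verification above is short enough that no abstraction is needed.
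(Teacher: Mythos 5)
Your proof is correct and follows the same route the paper implicitly takes: the paper simply declares associativity ``obvious'' and names the empty TAG as the unit, while you supply the (easy but worthwhile) verification that the ordinal sum of total orders is associative and that the empty order is neutral. No issues.
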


\begin{remark}
\label{remarca}
The standard labelling of the product $(\Gamma_1,\mu_1)\cdot(\Gamma_2,\mu_2)$ is obtained by keeping the standard labelling for $E(\Gamma_1)$ and shifting  the standard labelling of $E(\Gamma_2)$ by $|E(\Gamma_1)|$.
\end{remark}

Let us end this section by the following example illustrating the non-commutativity of our product:

\begin{example}
 One has 
\begin{equation}
 m \left( \parbox{3.3cm}{\includegraphics[scale=.7]{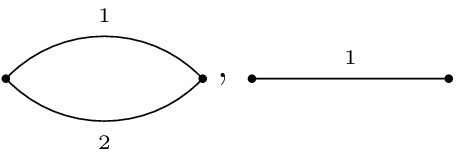}} \right) = 
\parbox{3cm}{\includegraphics[scale=.7]{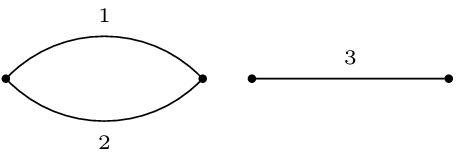}}
\end{equation}
 and
\begin{equation}
 m\left( \parbox{3.3cm}{\includegraphics[scale=.7]{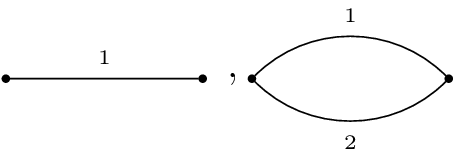}} \right) = \parbox{3.3cm}{\includegraphics[scale=.7]{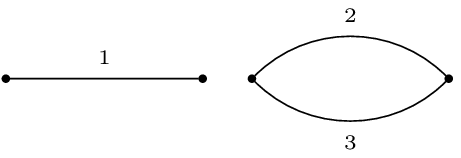}} = \parbox{3.3cm}{\includegraphics[scale=.7]{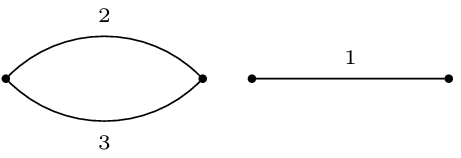}}.
\end{equation}

\end{example}



\section{Hopf algebra structure}\label{sec:hop}



Let us first give the following definitions:

\begin{definition}
\label{def:subgraf}
 A subgraph $\gamma$ of a graph $\Gamma$ is the graph formed by a given subset of edges $e$ 
of the set of edges of the graph $\Gamma$ together with the vertices that the edges of $e$ 
hook to in $\Gamma$.
\end{definition}

 Let us notice that a subgraph is not necessary connected
nor spanning. 

\begin{definition}
A {\bf totally assigned subgraph} $(\gamma,\nu)$ of a given TAG  $(\Gamma,\mu)$ is a subgraph $\gamma$ of $\Gamma$in the sense of Definition \ref{def:subgraf}, together with the total order $\nu$ on $E(\gamma)$ induced by $\mu$. The {\bf shrinking} $(\Gamma,\mu)/(\gamma,\nu)$ of a given TAG $(\Gamma,\mu)$ by a totally assigned subgraph $(\gamma,\nu)$ is defined as follows: the cograph $\Gamma/\gamma$ is obtained as usual, by shrinking each connected component of $\gamma$ on a point, and the totally ordered scale assignment $\mu/\nu$ of the cograph $\Gamma/\gamma$ 
is given by restricting the total order $\mu$ on the edges of the cograph, i.e. the edges of $\Gamma$ which are not internal to $\gamma$. 
The TAG $(\Gamma/\gamma,\mu/\nu)$ is called a totally assigned cograph.
\end{definition}




\medskip
Let us now define the coproduct
$\Delta: \H \longrightarrow \H \otimes \H$ as 
\begin{equation}\label{eq:coprod}
\Delta\big((\Gamma,\mu)\big) =  \sum_{\emptyset \subseteq(\gamma,\nu)\subseteq 
(\Gamma,\mu)}  (\gamma,\nu) \otimes  (\Gamma/\gamma,\mu/\nu)
\end{equation}
for any TAG $(\Gamma,\mu)$.
\begin{example}
1) Let $(\Gamma_1,\mu_1)$ be the TAG in Fig. \ref{fig:TAG1}.

One has the coproduct:

\begin{eqnarray*}
\Delta(G,\mu) = (G,\mu)\otimes 1_\H + 1_\H \otimes (G,\mu) + 
2\parbox{3.5cm}{\includegraphics[scale=1]{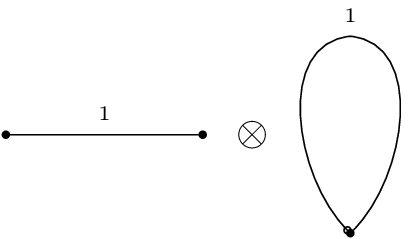}}
\end{eqnarray*}

2) Let $(G,\mu)$ be the TAG given in Figure \ref{fig:TAG2}. 

\begin{align*}
\Delta&((G,\mu)) = (G,\mu) \otimes 1_{\H} + 1_{\H} \otimes (G,\mu) + 2 \parbox{4cm}{\includegraphics[scale=1]{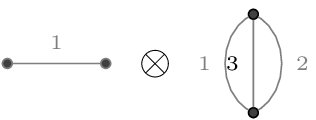}}  \nonumber\\&  
+ 2 \parbox{3.5cm}{\includegraphics[scale=1]{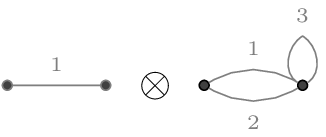}}  
+ 5 \parbox{2.5cm}{\includegraphics[scale=.9]{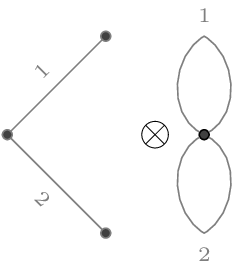}} 
\nonumber\\&
+ \parbox{3.5cm}{\includegraphics[scale=1]{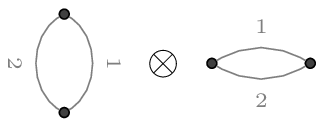}} + 2 \parbox{3cm}{\includegraphics[scale=.8]{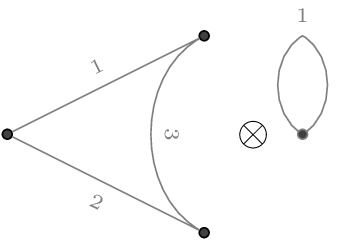}} 
\nonumber\\&  
+ 2 \parbox{3cm}{\includegraphics[scale=1]{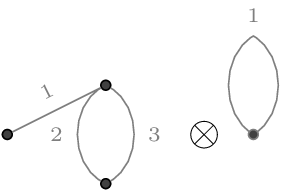}} 
\end{align*}
\end{example}

\begin{example}
Let $(G,\mu)$ be the TAG given in Fig. \ref{fig:TAG4}. 

\begin{figure}[!ht]
\begin{center}
\includegraphics[scale=1]{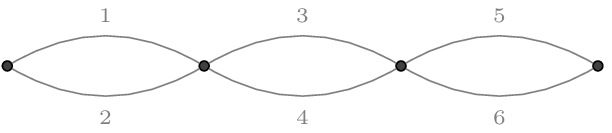}
\caption{Another TAG.}
\label{fig:TAG4}
\end{center}
\end{figure}

\begin{eqnarray*}
\Delta((G,\mu)) = (G,\mu)\otimes 1_{\H} + 1_{\H} \otimes (G,\mu) + \parbox{6cm}{\includegraphics[scale=.8]{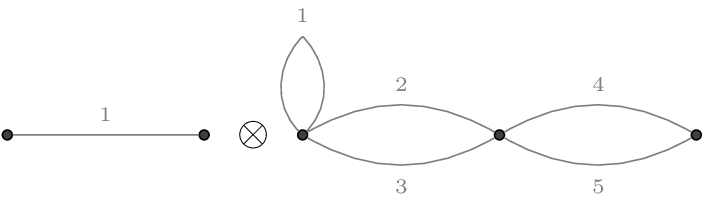}} \\
+ \parbox{6cm}{\includegraphics[scale=.8]{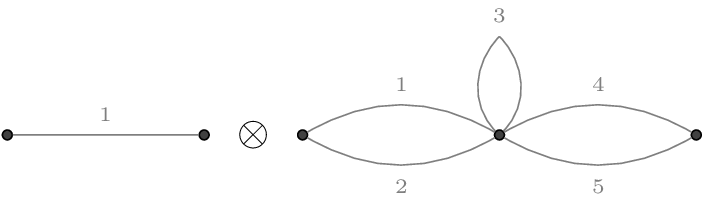}} + \dots + 3 \parbox{6cm}{\includegraphics[scale=.8]{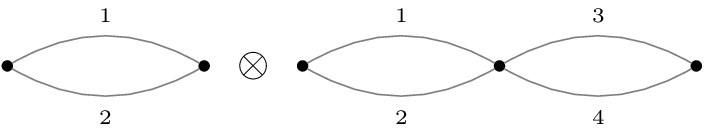}}  \\ 
+ \parbox{5cm}{\includegraphics[scale=.7]{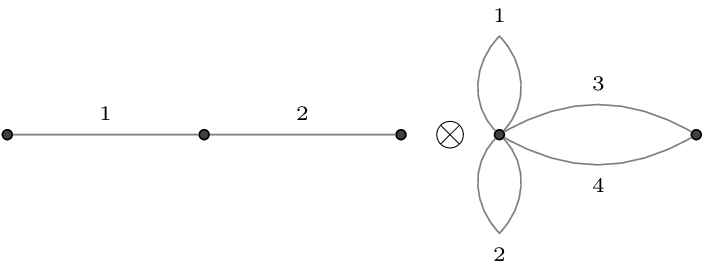}} + \dots + \parbox{6.5cm}{\includegraphics[scale=.8]{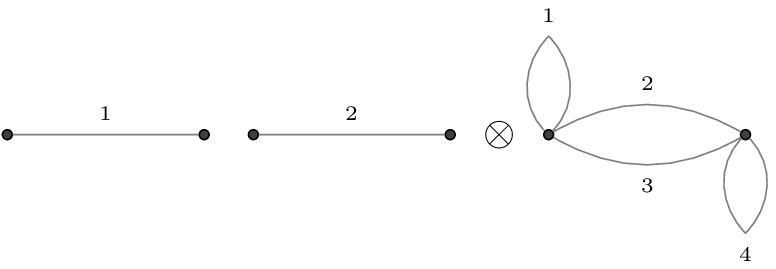}} + \dots \\ \parbox{5cm}{\includegraphics[scale=.7]{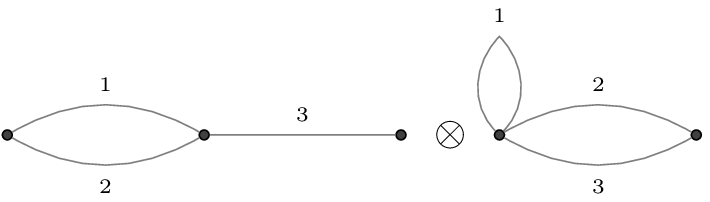}} + \dots + \parbox{6cm}{\includegraphics[scale=.7]{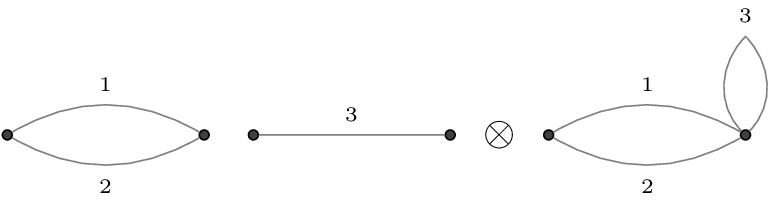}} + \dots \\
\parbox{6cm}{\includegraphics[scale=.7]{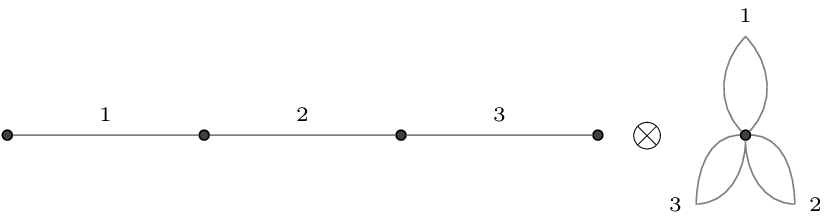}} + \dots + 2 \parbox{5.5cm}{\includegraphics[scale=.7]{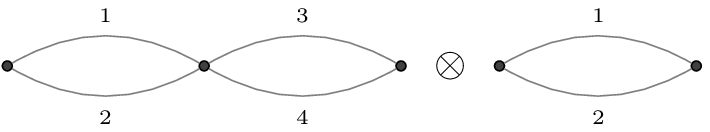}} 
\\ + 2 \parbox{5cm}{\includegraphics[scale=.7]{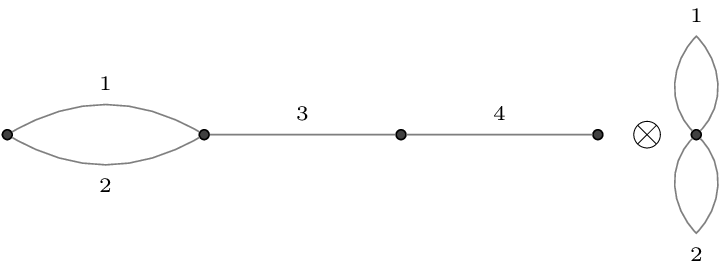}}  + \parbox{5cm}{\includegraphics[scale=.7]{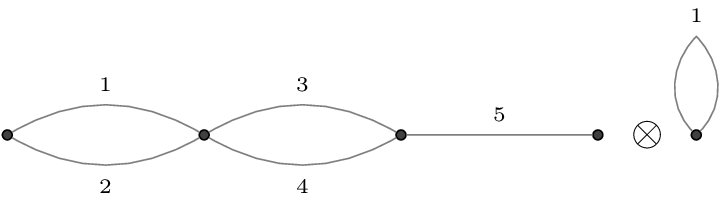}} + \dots 
\end{eqnarray*}
where we have denoted by $\dots$ for the similar graph obtained from choosing the  same type of subgraph on the LHS on the coproduct; note that this can lead, in some cases, 
to distinct subgraphs on the RHS (sometimes just because of the distinct labels).
\end{example}


\medskip
\begin{lemma}\label{lm:coass}
Let $(\Gamma,\mu)$ be a TAG in $\H$. Let $(\gamma,\nu)$ and $(\delta,\nu')$ 
be two totally assigned subgraphs such that 
$(\delta,\nu') \subseteq (\gamma,\nu) \subseteq (\Gamma,\mu)$. 
Then, one has 
\begin{equation}
 (\Gamma/\gamma,\mu/\nu) =  \big((\Gamma/\delta)/({\gamma / \delta}),(\mu/\nu')/(\nu / \nu')\big).
\end{equation}
\end{lemma}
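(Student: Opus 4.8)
The statement is the classical "shrinking is associative/transitive" compatibility, now decorated with total orders; the plan is to verify the identity on the two pieces of data that make up a TAG — the underlying graph and the total order on its edges — separately, since equality of TAGs means equality of both.

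\textbf{Step 1: the underlying cograph.} First I would check that $\Gamma/\gamma$ and $(\Gamma/\delta)/(\gamma/\delta)$ are the same graph. This is the well-known fact that contracting $\delta$ and then contracting the image $\gamma/\delta$ is the same as contracting $\gamma$ in one go: in both cases one identifies two vertices of $\Gamma$ precisely when they are joined by a path internal to $\gamma$, and one keeps exactly the edges of $\Gamma$ not internal to $\gamma$. Concretely, the connected components of $\gamma$ are obtained by merging the connected components of $\delta$ according to how $\gamma/\delta$ connects them, so shrinking each component of $\gamma$ to a point agrees with the two-step procedure. I would phrase this by exhibiting the obvious bijection on vertices (equivalence classes of vertices of $\Gamma$ under "connected in $\gamma$") and noting both constructions leave $E(\Gamma)\setminus E(\gamma)$ untouched as an edge set, with the same incidences.

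\textbf{Step 2: the total order.} Here the edge set of every cograph in sight is literally the same subset $E(\Gamma)\setminus E(\gamma)$ of $E(\Gamma)$: indeed $E(\gamma/\delta)=E(\gamma)\setminus E(\delta)$, so removing $E(\delta)$ then $E(\gamma)\setminus E(\delta)$ from $E(\Gamma)$ leaves $E(\Gamma)\setminus E(\gamma)$. All the orders involved — $\mu/\nu$ on one side, and $(\mu/\nu')/(\nu/\nu')$ on the other — are, by the very definition of $\mu/\nu$ as "the restriction of $\mu$", simply the restriction of $\mu$ to this common subset. Restriction of a total order is transitive: restricting $\mu$ to $E(\Gamma)\setminus E(\delta)$ and then to $E(\Gamma)\setminus E(\gamma)$ gives the same order as restricting $\mu$ directly. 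So the two totally ordered scale assignments coincide.

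\textbf{Main obstacle.} There is no deep obstacle; the only thing requiring care is Step 1, keeping the bookkeeping of vertex identifications straight so that the bijection on vertices is genuinely an isomorphism of graphs (same multi-edges, same self-loops) and not merely a bijection on vertex sets — in particular one must make sure that an edge of $\Gamma$ which becomes a self-loop after contracting $\gamma$ also becomes a self-loop under the two-step contraction, and conversely. Once the graph-theoretic identity is pinned down, the order-theoretic part (Step 2) is immediate from the definitions. I would then remark that this lemma is exactly what is needed to prove coassociativity of $\Delta$ in the next step.
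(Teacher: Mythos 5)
Your proposal is correct and follows essentially the same route as the paper's proof: establish the graph identity $\Gamma/\gamma=(\Gamma/\delta)/(\gamma/\delta)$ and then observe that all the induced orders are just restrictions of $\mu$ to the common edge set $E(\Gamma)\setminus E(\gamma)$, so they coincide by transitivity of restriction. You simply spell out the vertex-identification bookkeeping in Step 1 more explicitly than the paper, which states that identity without further justification.
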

\begin{proof}
Since, $\delta \subseteq \gamma \subseteq \Gamma$, then one has 
$\gamma / \delta \subseteq \Gamma / \delta$. 
One has $\Gamma / \gamma = (\Gamma /  \delta) / (\gamma / \delta)$. 
Moreover, the total order $\nu$ (resp. $\nu'$) is induced by restriction of $\mu$ 
(resp. $\nu$ or $\mu$) to the set of edges of $\delta$.  
Then $\mu / \nu = (\mu/ \nu') / (\nu / \nu')$, which concludes the proof.

\end{proof}

\begin{proposition}\label{prop:coass}
The coproduct defined in \eqref{eq:coprod} is coassociative.
\end{proposition}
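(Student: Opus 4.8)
The plan is to verify coassociativity by computing both $(\Delta\otimes\mathrm{id})\circ\Delta$ and $(\mathrm{id}\otimes\Delta)\circ\Delta$ applied to an arbitrary TAG $(\Gamma,\mu)$, and exhibiting a bijection between the index sets of the two resulting sums under which the tensor factors match. Concretely, $(\mathrm{id}\otimes\Delta)\circ\Delta\big((\Gamma,\mu)\big)$ is a sum over pairs of nested totally assigned subgraphs $(\delta,\nu')\subseteq(\gamma,\nu)\subseteq(\Gamma,\mu)$ — the outer $\Delta$ produces $(\gamma,\nu)\otimes(\Gamma/\gamma,\mu/\nu)$, and applying $\Delta$ to the second factor produces a totally assigned subgraph of $(\Gamma/\gamma,\mu/\nu)$, which is exactly of the form $(\delta'/\gamma,\ldots)$ for some $(\gamma,\nu)\subseteq(\delta',\cdot)\subseteq(\Gamma,\mu)$. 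Likewise $(\Delta\otimes\mathrm{id})\circ\Delta\big((\Gamma,\mu)\big)$ is a sum over pairs $(\delta,\nu')\subseteq(\gamma,\nu)\subseteq(\Gamma,\mu)$, the outer coproduct splitting $(\gamma,\nu)$ into $(\delta,\nu')\otimes(\gamma/\delta,\nu/\nu')$. So both iterated coproducts are naturally indexed by flags $\emptyset\subseteq(\delta,\nu')\subseteq(\gamma,\nu)\subseteq(\Gamma,\mu)$ of totally assigned subgraphs, and I would set up this common index set explicitly.

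The key steps, in order: first, establish that for a totally assigned subgraph $(\gamma,\nu)$ of $(\Gamma,\mu)$, the totally assigned subgraphs of the cograph $(\Gamma/\gamma,\mu/\nu)$ are precisely the TAGs of the form $(\delta/\gamma,\ldots)$ where $(\delta,\cdot)$ ranges over totally assigned subgraphs of $(\Gamma,\mu)$ containing $(\gamma,\nu)$; this is the combinatorial heart and follows from the standard correspondence between subgraphs of a contracted graph and subgraphs of the original graph containing the contracted part, together with the fact that the induced order on $E(\delta/\gamma)\subseteq E(\Gamma/\gamma)$ is the restriction of $\mu/\nu$, hence agrees with what one gets by restricting $\mu$ to $E(\delta)$ and then quotienting. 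Second, for the first tensor factors: on the $(\mathrm{id}\otimes\Delta)\circ\Delta$ side the leftmost factor is $(\gamma,\nu)$, on the $(\Delta\otimes\mathrm{id})\circ\Delta$ side it is $(\delta,\nu')$; relabel so that the flag is $(\delta,\nu')\subseteq(\gamma,\nu)\subseteq(\Gamma,\mu)$ in both cases. Third, check the three tensor slots agree term by term: the left slots give $(\delta,\nu')$, the middle slots give $(\gamma/\delta,\nu/\nu')$, and the right slots give $(\Gamma/\gamma,\mu/\nu)$ on one side versus $\big((\Gamma/\delta)/(\gamma/\delta),(\mu/\nu')/(\nu/\nu')\big)$ on the other — and these are equal by Lemma \ref{lm:coass}. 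The middle slot equality for the orders is the same kind of restriction-commutes-with-restriction argument already used in the proof of Lemma \ref{lm:coass}.

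I expect the main obstacle to be the bookkeeping in the first step: making rigorous that subgraphs of $\Gamma/\gamma$ correspond bijectively to subgraphs of $\Gamma$ containing all of $\gamma$'s edges, and that this bijection is compatible with the induced total orders. Since a subgraph here is determined by its edge set (Definition \ref{def:subgraf}), the edge-set bijection $E(\delta/\gamma)=E(\delta)\setminus E(\gamma)\hookrightarrow E(\Gamma)\setminus E(\gamma)=E(\Gamma/\gamma)$ is transparent, but one must note that the vertices are determined by the edges in both pictures (a contracted vertex of $\Gamma/\gamma$ attached to an edge of $\delta/\gamma$ corresponds correctly to the attaching data in $\delta$), so the correspondence is genuinely a bijection of TAGs. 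Once that is in place, the order-compatibility is immediate because every order in sight is a restriction of $\mu$ to a subset of $E(\Gamma)$, and restrictions compose. I would therefore write the proof as: reduce to the flag description via the first step, then invoke Lemma \ref{lm:coass} to conclude the two iterated coproducts coincide term by term, hence are equal.

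\begin{proof}
We compute $(\Delta\otimes\mathrm{id})\circ\Delta$ and $(\mathrm{id}\otimes\Delta)\circ\Delta$ on a TAG $(\Gamma,\mu)$ and show both equal
\begin{equation*}
\sum_{\emptyset\subseteq(\delta,\nu')\subseteq(\gamma,\nu)\subseteq(\Gamma,\mu)}(\delta,\nu')\otimes(\gamma/\delta,\nu/\nu')\otimes(\Gamma/\gamma,\mu/\nu),
\end{equation*}
the sum running over flags of totally assigned subgraphs. A subgraph being determined by its edge set, the totally assigned subgraphs of a cograph $(\Gamma/\gamma,\mu/\nu)$ are exactly the TAGs $(\delta/\gamma,(\mu/\nu)|_{E(\delta)\setminus E(\gamma)})$ with $(\gamma,\nu)\subseteq(\delta,\nu_\delta)\subseteq(\Gamma,\mu)$, since $E(\Gamma/\gamma)=E(\Gamma)\setminus E(\gamma)$ and the attaching data of an edge not internal to $\gamma$ is unchanged by the contraction; moreover $(\mu/\nu)|_{E(\delta)\setminus E(\gamma)}=\mu|_{E(\delta)\setminus E(\gamma)}$ is the order $\nu_\delta/\nu$. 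Hence $(\mathrm{id}\otimes\Delta)\circ\Delta\big((\Gamma,\mu)\big)=\sum(\gamma,\nu)\otimes(\delta/\gamma,\nu_\delta/\nu)\otimes\big((\Gamma/\gamma)/(\delta/\gamma),(\mu/\nu)/(\nu_\delta/\nu)\big)$ over pairs $(\gamma,\nu)\subseteq(\delta,\nu_\delta)\subseteq(\Gamma,\mu)$. Applying Lemma \ref{lm:coass} to the inclusions $(\gamma,\nu)\subseteq(\delta,\nu_\delta)\subseteq(\Gamma,\mu)$, the third factor equals $(\Gamma/\delta,\mu/\nu_\delta)$; renaming $(\gamma,\nu_\delta)\mapsto(\gamma,\nu)$ and $(\gamma,\nu)\mapsto(\delta,\nu')$ yields the displayed sum. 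On the other side, $(\Delta\otimes\mathrm{id})\circ\Delta\big((\Gamma,\mu)\big)=\sum_{(\gamma,\nu)\subseteq(\Gamma,\mu)}\Delta\big((\gamma,\nu)\big)\otimes(\Gamma/\gamma,\mu/\nu)=\sum_{(\delta,\nu')\subseteq(\gamma,\nu)\subseteq(\Gamma,\mu)}(\delta,\nu')\otimes(\gamma/\delta,\nu/\nu')\otimes(\Gamma/\gamma,\mu/\nu)$, which is already the displayed sum. The two iterated coproducts therefore coincide, so $\Delta$ is coassociative.
\end{proof}
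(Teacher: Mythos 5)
Your proof is correct and follows essentially the same route as the paper: both reduce the two iterated coproducts to a single sum over flags $\emptyset\subseteq(\delta,\nu')\subseteq(\gamma,\nu)\subseteq(\Gamma,\mu)$, using the bijection between totally assigned subgraphs of the cograph $(\Gamma/\gamma,\mu/\nu)$ and totally assigned subgraphs of $(\Gamma,\mu)$ containing $(\gamma,\nu)$, and invoking Lemma \ref{lm:coass} to identify the third tensor factor. (Only a trivial slip: the relabelling at the end should read $(\delta,\nu_\delta)\mapsto(\gamma,\nu)$ rather than $(\gamma,\nu_\delta)\mapsto(\gamma,\nu)$.)
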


\begin{proof}
Let $(\Gamma,\mu) \in \H$. Then, one has:
\begin{align}\label{eq:coassLHS}
&(\Delta \otimes Id) \circ \Delta\big ((\Gamma,\mu)\big) = 
(\Delta \otimes Id) 
\left(\sum_{(\gamma,\nu)\subset (\Gamma,\mu)}  (\gamma,\nu) \otimes 
 (\Gamma/\gamma,\mu/\nu)\right) \nonumber\\
&= \sum_{(\gamma,\nu)\subset (\Gamma,\mu)} \left(\sum_{(\gamma',\nu')\subset (\gamma,\nu)} 
 (\gamma',\nu') \otimes  (\gamma/{\gamma'},\nu/{\nu'})\right) 
\otimes  (\Gamma/\gamma,\mu/\nu)  \nonumber\\
&= \sum_{\substack{(\gamma,\nu)\subset (\Gamma,\mu) \\ 
(\gamma',\nu')\subset (\gamma,\nu)}}  
 (\gamma',\nu') \otimes  (\gamma/{\gamma'},\nu/{\nu'}) 
\otimes  (\Gamma/\gamma,\mu/\nu).
\end{align}
and
\begin{align}\label{eq:coassRHS}
&(Id \otimes \Delta) \circ \Delta ((\Gamma,\mu)) = (Id \otimes \Delta) 
\left(\sum_{(\gamma,\nu))\subset (\Gamma,\mu)} 
 (\gamma,\nu) \otimes  (\Gamma/\gamma,\mu/\nu)\right) \nonumber\\
&= \sum_{(\gamma,\nu)\subset (\Gamma,\mu)}  (\gamma,\nu) 
\otimes \left( \sum_{(\gamma',\nu')\subset (\Gamma/\gamma,\mu/\nu)}  (\gamma',\nu') 
\otimes  \big((\Gamma/\gamma)/{\gamma'},(\mu/\nu)/{\nu'}\big)\right).
\end{align}

There is a one-to-one correspondence between the 
assigned subgraphs $(\gamma',\nu')\subseteq (\Gamma/\gamma,\mu/\nu)$ 
and the 
assigned subgraphs $(\gamma_1,\nu_1)\subseteq (\Gamma,\mu)$ 
such that $(\gamma,\nu)\subseteq(\gamma_1,\nu_1)$. Indeed, starting from an 
assigned subgraph $(\gamma_1,\nu_1)\subseteq (\Gamma,\mu)$ 
such that $ (\gamma,\nu)\subseteq (\gamma_1,\nu_1)$, we find 
an   
assigned subgraph $(\gamma',\nu')\subseteq (\Gamma/\gamma,\mu/\nu)$ by restricting the total order $\nu_1$ to the edges of $\gamma_1$ which are not internal to $\gamma$, and the inverse operation consists in extending the total order $\nu'$ to all edges of $\gamma_1$ in the unique way compatible with the total order $\mu$ on $E(\Gamma)$. 

Applying Lemma \ref{lm:coass}, one has $\big((\Gamma/\gamma)/{\gamma'},(\mu/\nu)/{\nu'}\big) = (\Gamma/\gamma_1,\mu/\nu_1 )$. Equation \eqref{eq:coassRHS} can then be rewritten as follows:
\begin{align}\label{eq:coassRHS1}
&(Id \otimes \Delta) \circ \Delta \big((\Gamma,\mu)\big) \nonumber\\
& = \sum_{\substack{(\gamma_1,\nu_1)\subset (\Gamma,\mu) \\ (\gamma,\nu)\subset (\gamma_1,\nu_1)}}  
 (\gamma,\nu) \otimes  (\gamma_1/{\gamma},\nu_1/{\nu}) 
\otimes  (\Gamma/{\gamma_1},\mu/{\nu_1}).
\end{align}
Using equations \eqref{eq:coassLHS} and \eqref{eq:coassRHS1}, one concludes the proof.
\end{proof}

\medskip
Furthermore, we define the counit $\epsilon: \H \longrightarrow \KK$ by 
\begin{equation}
\epsilon\big((\Gamma,\mu)\big) = \begin{cases} 1 \mbox{ if } (\Gamma,\mu) = 1_{\H};\\0 \mbox{ otherwise.}\end{cases}
\end{equation}
\medskip
\begin{theorem}
The triple $(\H,\Delta,\epsilon)$ is a coassociative coalgebra with counit.
\end{theorem}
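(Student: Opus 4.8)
The plan is to note first that coassociativity has already been proved in Proposition~\ref{prop:coass}, so that the only thing left to verify is the counit axiom: under the canonical identifications $\KK\ot\H\cong\H\cong\H\ot\KK$ one must check that
$(\ep\ot\mathrm{Id})\circ\Delta=\mathrm{Id}=(\mathrm{Id}\ot\ep)\circ\Delta$.
Both identities will follow by applying $\ep$ to one of the two tensor factors in the defining formula \eqref{eq:coprod} and using that $\ep$ annihilates every TAG except $1_\H$, so that in each case exactly one summand survives.

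For the left counit I would apply $\ep\ot\mathrm{Id}$ to $\Delta\big((\Gamma,\mu)\big)=\sum_{\emptyset\subseteq(\gamma,\nu)\subseteq(\Gamma,\mu)}(\gamma,\nu)\ot(\Gamma/\gamma,\mu/\nu)$. Since $\ep\big((\gamma,\nu)\big)=0$ unless $(\gamma,\nu)=1_\H$, and the only totally assigned subgraph of $(\Gamma,\mu)$ equal to $1_\H$ is the empty one $\gamma=\emptyset$, the sum collapses to $\ep(1_\H)\,(\Gamma/\emptyset,\mu/\emptyset)=(\Gamma,\mu)$, since shrinking by the empty subgraph does nothing.

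For the right counit I would apply $\mathrm{Id}\ot\ep$ to the same sum and identify exactly those $(\gamma,\nu)$ for which the cograph $(\Gamma/\gamma,\mu/\nu)$ equals $1_\H$. The key observation is that shrinking deletes none of the edges of $\Gamma$ external to $\gamma$: the cograph $\Gamma/\gamma$ has precisely $|E(\Gamma)|-|E(\gamma)|$ edges, hence is edgeless, i.e. equal to $1_\H$, if and only if $E(\gamma)=E(\Gamma)$, which by Definition~\ref{def:subgraf} determines $\gamma$ uniquely as the full subgraph $\gamma=\Gamma$. Thus only the summand $(\Gamma,\mu)\ot 1_\H$ survives, giving $(\Gamma,\mu)\,\ep(1_\H)=(\Gamma,\mu)$. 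Extending both identities by linearity over the basis of TAGs then finishes the proof.

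The only delicate point — and the place where a reader might pause — is the assertion that contracting the full subgraph gives the empty TAG, i.e. $(\Gamma/\Gamma,\mu/\mu)=1_\H$: this rests on identifying an edgeless graph with the empty graph $1_\H$, consistently with the role of $1_\H$ as the unit of $(\H,m,1_\H)$ and with the examples of Section~\ref{sec:hop}. Once this convention is fixed, and combined with the fact (Definition~\ref{def:subgraf}) that a subgraph is determined by its edge set, the counting argument above shows both that such a term exists and that it is the unique one, so no stray contributions can spoil the counit identity.
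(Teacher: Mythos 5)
Your proposal is correct and follows essentially the same route as the paper: coassociativity is delegated to Proposition~\ref{prop:coass}, and the counit axiom is checked by observing that $\ep$ kills every nonempty TAG, so only the boundary terms $1_\H\ot(\Gamma,\mu)$ and $(\Gamma,\mu)\ot 1_\H$ of the coproduct survive. Your explicit remark that the term $(\Gamma,\mu)\ot 1_\H$ rests on the convention $(\Gamma/\Gamma,\mu/\mu)=1_\H$ is a point the paper leaves implicit, but it does not change the argument.
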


\begin{proof}
Let us show that $\epsilon$ is a counit of the coalgebra. For any 
TAG $(\Gamma,\mu)$, one has
\begin{eqnarray*}
(\epsilon \otimes Id) \circ \Delta\big ((\Gamma,\mu)\big) = \epsilon\big((\Gamma,\mu)\big)1 + 
\epsilon(1_{\H})(\Gamma,\mu) + \sum_{(\gamma,\nu) \subsetneq (\Gamma,\mu)} \epsilon(\gamma,\nu) 
 (\Gamma/\gamma,\mu/\nu) = (\Gamma,\mu).
\end{eqnarray*}

Analogously, one has: $(Id \otimes \epsilon) \circ \Delta\big ((\Gamma,\mu)\big) = (\Gamma,\mu)$. 
One thus concludes that the maps $Id$, $(\epsilon \otimes Id) \circ \Delta$ and 
$(Id \otimes \epsilon) \circ \Delta$ 
coincide on 
TAGs, thus proving that 
$\epsilon$ is a counit of $\Delta$.
Using now Proposition \ref{prop:coass}, one concludes the proof.
\end{proof}
\begin{example}
Let us check the coassociativity of our coproduct on the example of the 
standard labeled TAG of Fig. \ref{fig:BigGraph}.
\begin{figure}
\begin{center}
\includegraphics[scale=1.25]{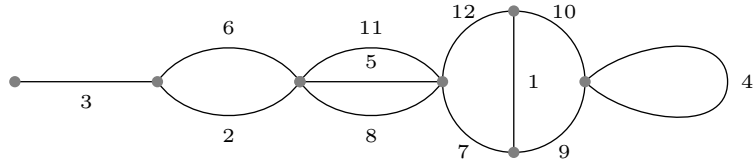}
\caption{A standard labeled TAG.}
\label{fig:BigGraph} 
\end{center}
\end{figure}
When acting with the coproduct on this standard labeled TAG, one gets the term of Fig. 
\ref{fig2}, which obviously adds up to the rest of the coproduct terms.
\begin{figure}
\begin{center}
\includegraphics[scale=1.25]{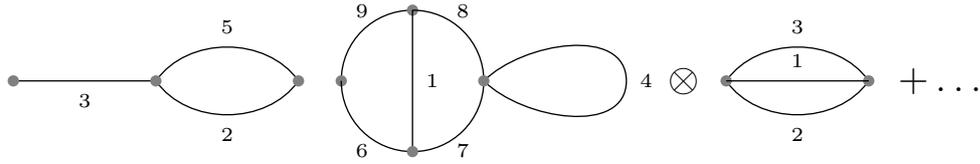}
\caption{One of the terms obtained by acting with the coproduct on the standard labeled TAG of 
Fig \ref{fig:BigGraph}.}
\label{fig2} 
\end{center}
\end{figure}
Another type of term is the one of Fig. \ref{fig3} (which again adds up to the rest of the coproduct terms).
\begin{figure}
\begin{center}
\includegraphics[scale=1.25]{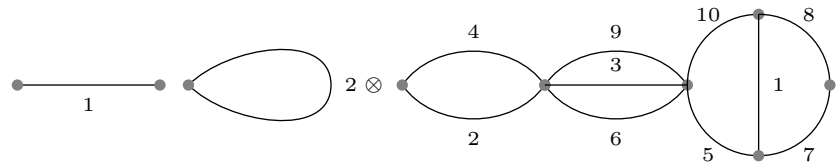}
\caption{Another terms obtained by acting with the coproduct on the standard labeled TAG of 
Fig \ref{fig:BigGraph}.}
\label{fig3} 
\end{center}
\end{figure}
Acting now on these terms with  $(\Delta \otimes Id)$ and respectively with $(Id \otimes \Delta)$
leads to the  same term  represented in Fig. \ref{fig4} with standard labelling.
\begin{figure}
\begin{center}
\includegraphics[scale=1.25]{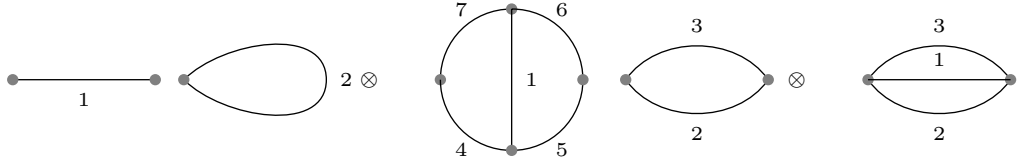}
\caption{The resulting in the LHS and RHS of the coassociativity identity.}
\label{fig4} 
\end{center}
\end{figure}
Let us also emphasize that this term cannot be obtained from other terms of $\Delta$ because of the diagrammatic 
difference of the various disconnected components of the graphs.
\end{example}

\bigskip

One has:

\begin{proposition}\label{prop:bialg}
Let $(\Gamma_1,\mu_1)$ and $(\Gamma_2,\mu_2)$ be two  
TAGs in $\H$. One has
\begin{equation}
\label{eq:coprod1}
\Delta\Big(m\big((\Gamma_1,\mu_1),(\Gamma_2,\mu_2)\big)\Big)
 = m^{\otimes 2}\circ \tau_{23}\big(\Delta(\Gamma_1,\mu_1), \Delta(\Gamma_2,\mu_2)\big)
\end{equation}
where $\tau_{23}$ is the flip of the two middle factors in $\Cal H^{\otimes 4}$.
\end{proposition}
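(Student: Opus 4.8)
The plan is to check the multiplicativity identity \eqref{eq:coprod1} by unravelling both sides as sums over pairs of totally assigned subgraphs and exhibiting a bijection between the index sets. First I would expand the right-hand side: by definition of $\Delta$,
\[
\Delta(\Gamma_1,\mu_1)\otimes\Delta(\Gamma_2,\mu_2)
=\sum_{\substack{(\gamma_1,\nu_1)\subseteq(\Gamma_1,\mu_1)\\(\gamma_2,\nu_2)\subseteq(\Gamma_2,\mu_2)}}
(\gamma_1,\nu_1)\otimes(\Gamma_1/\gamma_1,\mu_1/\nu_1)\otimes(\gamma_2,\nu_2)\otimes(\Gamma_2/\gamma_2,\mu_2/\nu_2),
\]
and after applying $\tau_{23}$ and then $m^{\otimes 2}$ this becomes
\[
\sum_{(\gamma_1,\nu_1),(\gamma_2,\nu_2)}\big((\gamma_1,\nu_1)\cdot(\gamma_2,\nu_2)\big)\otimes\big((\Gamma_1/\gamma_1,\mu_1/\nu_1)\cdot(\Gamma_2/\gamma_2,\mu_2/\nu_2)\big),
\]
where $\cdot$ is the product of \eqref{eq:product}, i.e.\ disjoint union together with the ordinal sum order.

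Next I would expand the left-hand side. Write $(\Gamma,\mu)=(\Gamma_1,\mu_1)\cdot(\Gamma_2,\mu_2)=(\Gamma_1\sqcup\Gamma_2,\mu_1\sqcup\mu_2)$. The key combinatorial observation is that a subset of $E(\Gamma_1\sqcup\Gamma_2)=E(\Gamma_1)\sqcup E(\Gamma_2)$ is the same thing as a pair of subsets, one of $E(\Gamma_1)$ and one of $E(\Gamma_2)$; hence every totally assigned subgraph $(\gamma,\nu)\subseteq(\Gamma,\mu)$ decomposes uniquely as $\gamma=\gamma_1\sqcup\gamma_2$ with $(\gamma_i,\nu_i)\subseteq(\Gamma_i,\mu_i)$, the order $\nu$ being the restriction of $\mu_1\sqcup\mu_2$. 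I would then verify the two compatibilities needed to match terms: first, that the induced order on $\gamma_1\sqcup\gamma_2$ is precisely $\nu_1\sqcup\nu_2$ (the ordinal sum), which holds because $\mu_1\sqcup\mu_2$ restricts to $\mu_i$ on each piece and keeps every edge of $\Gamma_1$ below every edge of $\Gamma_2$; and second, that
\[
(\Gamma/\gamma,\mu/\nu)=(\Gamma_1/\gamma_1,\mu_1/\nu_1)\cdot(\Gamma_2/\gamma_2,\mu_2/\nu_2),
\]
since contracting $\gamma_1\sqcup\gamma_2$ in $\Gamma_1\sqcup\Gamma_2$ is done componentwise and gives $(\Gamma_1/\gamma_1)\sqcup(\Gamma_2/\gamma_2)$, while $\mu/\nu$ is the restriction of $\mu_1\sqcup\mu_2$ to the non-contracted edges, which again splits as the ordinal sum $(\mu_1/\nu_1)\sqcup(\mu_2/\nu_2)$ because no edge of $\Gamma_2$ can precede an edge of $\Gamma_1$. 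Substituting these identities shows the left-hand side equals the expression obtained for the right-hand side, term by term.

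The only genuinely delicate point — and the step I expect to be the main obstacle — is keeping the total orders straight: one must be careful that the order induced by $\mu_1\sqcup\mu_2$ on a subgraph, and on a cograph, really is the \emph{ordinal} sum and not some interleaved order, and in particular that no edge of $\Gamma_2$ ever sits below an edge of $\Gamma_1$ in any of these induced orders. This follows immediately from the defining property of $\mu_1\sqcup\mu_2$ (namely $e_1<e_2$ for all $e_1\in E(\Gamma_1)$, $e_2\in E(\Gamma_2)$), which is preserved under passing to any subset and to any quotient by contraction, but it deserves to be stated explicitly. Once this is granted, the bijection $(\gamma,\nu)\leftrightarrow\big((\gamma_1,\nu_1),(\gamma_2,\nu_2)\big)$ is an order-preserving identification of the two index sets, the tensor factors agree on the nose, and \eqref{eq:coprod1} follows.
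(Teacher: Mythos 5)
Your proposal is correct and follows essentially the same route as the paper: both expand $\Delta\big((\Gamma_1\sqcup\Gamma_2,\mu_1\sqcup\mu_2)\big)$ as a sum over subgraphs, use the bijection between totally assigned subgraphs of the disjoint union and pairs of totally assigned subgraphs of the factors, and match terms. You are in fact somewhat more explicit than the paper about the one point it leaves implicit, namely that the orders induced by $\mu_1\sqcup\mu_2$ on the subgraph and on the cograph are again ordinal sums.
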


\begin{proof}
One has 
\begin{align}
\Delta\Big(m\big((\Gamma_1,\mu_1),(\Gamma_2,\mu_2)\big)\Big) 
&= \Delta (\Gamma_1 \sqcup \Gamma_2, \mu_1 \sqcup \mu_2)\\
&= \sum_{\emptyset\subseteq(g,\nu)\subseteq (\Gamma_1 \sqcup\Gamma _2,\mu_1 \sqcup \mu_2)} 
 (\gamma,\nu) \otimes  \big((\Gamma_1 \sqcup \Gamma_2)/\gamma,(\mu_1 \sqcup \mu_2)/\nu\big) \nonumber\\
& =  \sum_{\substack{(\gamma_1,\nu_1)\subseteq (\Gamma_1,\mu_1) \\  (\gamma_2,\nu_2)\subseteq 
(\Gamma_2,\mu_2)}} 
 (\gamma_1 \sqcup \gamma_2,\nu_1\sqcup \nu_2) \otimes  (\Gamma_1/\gamma_1 \sqcup \Gamma_2/\gamma_2,\mu_1/\nu_1 \sqcup \mu_2/\nu_2) \nonumber\\
& = m^{\otimes 2} \circ \tau_{23}(\Delta(\Gamma_1,\mu_1),\Delta(\Gamma_2,\mu_2)).
\end{align}
\end{proof}

\begin{example}
Let $(\Gamma_1,\mu_1)$ and $(\Gamma_2,\mu_2)$ be the graph in Fig. \ref {fig:expleTAG}(a).




One has:
\begin{eqnarray*}
\Delta(\Gamma_1 \sqcup \Gamma_2,\mu_1\sqcup\mu_2) = &&(\Gamma_1 \sqcup \Gamma_2,\mu_1\sqcup\mu_2) \otimes 1_\H + 
1_\H \otimes (\Gamma_1 \sqcup \Gamma_2,\mu_1\sqcup\mu_2) \\
&&+ 4 \parbox{4.5cm}{\includegraphics[scale=.6]{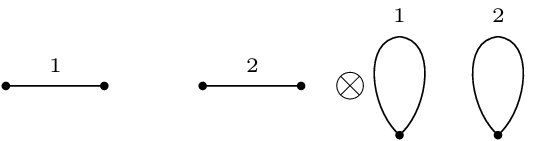}}
 + \parbox{5cm}{\includegraphics[scale=.9]{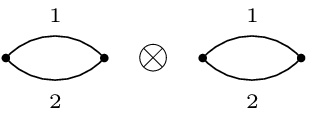}}  \\ 
&& + 4 \parbox{5.2cm}{\includegraphics[scale=1]{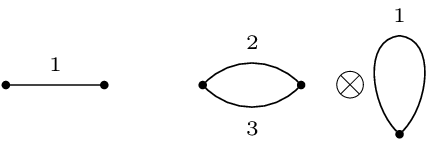}}
\end{eqnarray*}
\end{example}
\medskip

One has:
\begin{theorem}\label{thm:bialg}
$(\H,m,1_{\H},\Delta,\epsilon)$ is a bialgebra.
\end{theorem}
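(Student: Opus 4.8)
The plan is to assemble the bialgebra structure from the pieces already established. We have an associative unital algebra $(\H, m, 1_\H)$ from the proposition in Section~\ref{sec:alg}, and a coassociative counital coalgebra $(\H, \Delta, \epsilon)$ from the theorem just proved. By definition, to show that $(\H, m, 1_\H, \Delta, \epsilon)$ is a bialgebra it remains to verify that $\Delta$ and $\epsilon$ are algebra morphisms (equivalently, that $m$ and the unit are coalgebra morphisms). There are four compatibility conditions: $\Delta\circ m = (m\otimes m)\circ\tau_{23}\circ(\Delta\otimes\Delta)$; $\epsilon\circ m = \epsilon\otimes\epsilon$; $\Delta(1_\H) = 1_\H\otimes 1_\H$; and $\epsilon(1_\H) = 1$.

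First I would invoke Proposition~\ref{prop:bialg}, whose content is precisely the multiplicativity of $\Delta$: for TAGs $(\Gamma_1,\mu_1)$ and $(\Gamma_2,\mu_2)$ one has $\Delta(m((\Gamma_1,\mu_1),(\Gamma_2,\mu_2))) = m^{\otimes 2}\circ\tau_{23}(\Delta(\Gamma_1,\mu_1),\Delta(\Gamma_2,\mu_2))$. Since TAGs form a $\KK$-basis of $\H$ and all maps in sight are $\KK$-linear, this identity on basis elements extends to the required identity of linear maps $\H\otimes\H\to\H\otimes\H$. Next, for the counit: $\epsilon$ vanishes on every nonempty TAG and sends $1_\H$ to $1$; since $m((\Gamma_1,\mu_1),(\Gamma_2,\mu_2)) = (\Gamma_1\sqcup\Gamma_2, \mu_1\sqcup\mu_2)$ is the empty graph if and only if both $\Gamma_1$ and $\Gamma_2$ are empty, we get $\epsilon(m((\Gamma_1,\mu_1),(\Gamma_2,\mu_2))) = \epsilon(\Gamma_1,\mu_1)\,\epsilon(\Gamma_2,\mu_2)$ on basis elements, hence $\epsilon\circ m = \epsilon\otimes\epsilon$ by linearity. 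Finally, the unit compatibilities: $\epsilon(1_\H) = 1$ is immediate from the definition of $\epsilon$, and $\Delta(1_\H) = 1_\H\otimes 1_\H$ follows because the empty graph has no edges, so the only totally assigned subgraph appearing in the sum~\eqref{eq:coprod} is the empty one, contributing the single term $1_\H\otimes 1_\H$.

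Since all four axioms hold, $(\H, m, 1_\H, \Delta, \epsilon)$ is a bialgebra. I do not anticipate a genuine obstacle here: the only nontrivial compatibility is the multiplicativity of the coproduct, and that has already been discharged in Proposition~\ref{prop:bialg}; the remaining verifications are essentially bookkeeping about the empty TAG and linearity. If anything, the one point worth a sentence of care is the passage from identities on the TAG basis to identities of linear maps, which is routine but should be stated explicitly so that the proof reads cleanly.

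\begin{proof}
By the Proposition in Section~\ref{sec:alg}, $(\H,m,1_\H)$ is an associative unital algebra, and by the preceding Theorem, $(\H,\Delta,\epsilon)$ is a coassociative counital coalgebra. It remains to check that $\Delta$ and $\epsilon$ are algebra morphisms. Proposition~\ref{prop:bialg} establishes, for any two TAGs, the identity $\Delta\circ m = m^{\otimes 2}\circ\tau_{23}\circ(\Delta\otimes\Delta)$ on basis elements; since TAGs span $\H$ over $\KK$ and all the maps involved are $\KK$-linear, this identity holds on all of $\H\otimes\H$. For the counit, note that $m\big((\Gamma_1,\mu_1),(\Gamma_2,\mu_2)\big)=(\Gamma_1\sqcup\Gamma_2,\mu_1\sqcup\mu_2)$ equals $1_\H$ if and only if both factors equal $1_\H$, so $\epsilon\circ m=\epsilon\otimes\epsilon$ on basis elements, hence everywhere by linearity. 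Finally $\epsilon(1_\H)=1$ by definition of $\epsilon$, and $\Delta(1_\H)=1_\H\otimes 1_\H$ since the empty graph has no edges and the only totally assigned subgraph occurring in~\eqref{eq:coprod} is the empty one. All bialgebra axioms are therefore satisfied.
\end{proof}
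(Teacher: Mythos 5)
Your proof is correct and follows the same route as the paper: the paper's own proof simply invokes Proposition~\ref{prop:bialg} for the multiplicativity of $\Delta$ and concludes, leaving the counit and unit compatibilities implicit. You have merely spelled out those remaining (routine) verifications explicitly, which is fine.
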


\begin{proof}
Using Proposition \ref{prop:bialg}, it follows that $\Delta$ is a morphism of algebras. 
One thus concludes the proof.
\end{proof}

\bigskip
For all $n \in \NN$, one calls $\H(n)$ the vector space generated by the TAGs with $n$ edges. Then one has $\H = \bigoplus_{n\in\NN}\H(n)$. Moreover, one has:

\begin{itemize}
\item[1.] For all $m,n \in \NN$, $\H(m)\H(n) \subseteq \H(m+n) $.
\item[2.] For all $n\in \NN$, $\Delta\big(\H(n)\big) \subseteq \sum_{i+j=n} \H(i)\otimes \H(j)$.
\end{itemize}

One thus concludes that $\H$ is a {\it graded bialgebra}. Note that 
 $\H$ is {\it connected}.

We can now state the main result of this section:
\begin{theorem}
The bialgebra $(\H,m,1_\H,\Delta,\epsilon)$ is a Hopf algebra.
\end{theorem}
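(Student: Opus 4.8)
The plan is to invoke the standard fact that any graded connected bialgebra is automatically a Hopf algebra, the antipode being constructed recursively from the counit and coproduct. Since the previous results establish that $(\H,m,1_\H,\Delta,\epsilon)$ is a graded bialgebra (with $\H=\bigoplus_{n\in\NN}\H(n)$, the grading by number of edges, and $\H(0)=\KK\cdot 1_\H$, so $\H$ is connected), there is nothing left but to exhibit the antipode or cite the general lemma.

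First I would recall the general lemma: if $(B,m,u,\Delta,\epsilon)$ is a connected graded bialgebra, then the identity map $\mathrm{Id}:B\to B$ has a two-sided inverse $S$ for the convolution product $\star$ defined by $f\star g=m\circ(f\ot g)\circ\Delta$, whose unit is $u\circ\epsilon$; this $S$ is the antipode. I would then give the recursive formula explicitly on a TAG $(\Gamma,\mu)$ of positive degree, using Sweedler-type notation via \eqref{eq:coprod}: writing $\widetilde\Delta$ for the reduced coproduct $\Delta(x)-x\ot 1_\H-1_\H\ot x$, one sets $S(1_\H)=1_\H$ and, for $(\Gamma,\mu)\neq 1_\H$,
\begin{equation*}
S\big((\Gamma,\mu)\big)=-(\Gamma,\mu)-\sum_{\emptyset\subsetneq(\gamma,\nu)\subsetneq(\Gamma,\mu)}S\big((\gamma,\nu)\big)\cdot(\Gamma/\gamma,\mu/\nu).
\end{equation*}
One checks this is well defined because every $(\gamma,\nu)$ appearing in the sum has strictly fewer edges than $(\Gamma,\mu)$, so the recursion terminates; connectedness is exactly what guarantees that the sum ranges over lower-degree terms only. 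Then a short induction on the number of edges shows $m\circ(S\ot\mathrm{Id})\circ\Delta=u\circ\epsilon=m\circ(\mathrm{Id}\ot S)\circ\Delta$, i.e. $S$ is a genuine antipode, and its being an algebra anti-homomorphism and coalgebra anti-homomorphism then follows automatically (or can be cited).

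There is essentially no obstacle here: the only point requiring a word is the well-definedness of the recursion, which is immediate from the grading, together with the observation that the bialgebra axioms were already verified in Theorem \ref{thm:bialg}. So I would keep the proof to two or three sentences, citing the connected-graded-bialgebra criterion (e.g. as in Manchon's survey or Sweedler) and displaying the recursive antipode formula above for concreteness, then noting that the verification $S\star\mathrm{Id}=\mathrm{Id}\star S=u\circ\epsilon$ is the routine induction on $\deg$. If the paper prefers to be self-contained, I would spell out that one-line induction: for $\deg(\Gamma,\mu)=n>0$, $(S\star\mathrm{Id})\big((\Gamma,\mu)\big)=S\big((\Gamma,\mu)\big)+(\Gamma,\mu)+\sum_{\emptyset\subsetneq(\gamma,\nu)\subsetneq(\Gamma,\mu)}S\big((\gamma,\nu)\big)\cdot(\Gamma/\gamma,\mu/\nu)=0=\epsilon\big((\Gamma,\mu)\big)1_\H$ by the defining formula for $S$, and symmetrically for $\mathrm{Id}\star S$.
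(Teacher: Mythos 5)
Your proposal is correct and follows exactly the same route as the paper: both invoke the standard fact that a connected graded bialgebra is automatically a Hopf algebra (the grading by number of edges and connectedness having been established just before the theorem) and then display the recursive formula for the antipode on a non-empty TAG. The only difference is cosmetic — you spell out the well-definedness of the recursion and the one-line convolution-inverse induction, which the paper leaves implicit.
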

\begin{proof}
The bialgebra $(\H,m,1_\H,\Delta,\epsilon)$ is
 connected and graded. The conclusion follows. The antipode $S :\H \longrightarrow \H$ verifies $S(1_{\H})=1_{\H}$, and is given on 
a non-empty 
TAG $(\Gamma,\mu)$ by one of the two following recursive formulas:
\begin{eqnarray}
S(\Gamma,\mu) &=& - (\Gamma,\mu) - \sum_{\emptyset\subsetneq(\gamma,\nu) \subsetneq (\Gamma,\mu)}  
S(\gamma,\nu)\cdot   (\Gamma/{\gamma},\mu/{\nu})\\
&=& - (\Gamma,\mu) - \sum_{\emptyset\subsetneq(\gamma,\nu) \subsetneq (\Gamma,\mu)}  
(\gamma,\nu)\cdot S(\Gamma/{\gamma},\mu/{\nu}).
\end{eqnarray}
\end{proof}

\bigskip


Note that if one considers now the Hopf algebra  $\H^c$ of graphs 
(without any edge scale decoration), one has:

\begin{proposition}
The map $\pi$ from $\H$ to $\H^c$ defined on the 
TAGs by $\pi\big((\Gamma,\mu)\big) = \Gamma$ 
is a Hopf algebra morphism.
\end{proposition}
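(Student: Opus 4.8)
The plan is to verify directly that $\pi$ intertwines the structure maps on both sides. First I would recall what $\H^c$ is: the Connes--Kreimer--type Hopf algebra of (not necessarily connected, not necessarily 1PI) graphs, with product given by disjoint union and coproduct $\Delta^c(\Gamma)=\sum_{\gamma\subseteq\Gamma}\gamma\otimes\Gamma/\gamma$, where the sum is over all subgraphs in the sense of Definition \ref{def:subgraf}. The map $\pi$ simply forgets the total order $\mu$, so on generators $\pi(\Gamma,\mu)=\Gamma$ and we extend $\KK$-linearly. I would first check $\pi$ is an algebra morphism: $\pi\big(m((\Gamma_1,\mu_1),(\Gamma_2,\mu_2))\big)=\pi(\Gamma_1\sqcup\Gamma_2,\mu_1\sqcup\mu_2)=\Gamma_1\sqcup\Gamma_2=\pi(\Gamma_1,\mu_1)\cdot\pi(\Gamma_2,\mu_2)$, and $\pi(1_\H)=1_{\H^c}$ since the empty TAG maps to the empty graph. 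This step is immediate because the ordinal sum order plays no role once $\mu$ is discarded.

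Next I would check compatibility with the coproducts, i.e. $(\pi\otimes\pi)\circ\Delta=\Delta^c\circ\pi$. Applying $\pi\otimes\pi$ to \eqref{eq:coprod} gives $\sum_{\emptyset\subseteq(\gamma,\nu)\subseteq(\Gamma,\mu)}\gamma\otimes(\Gamma/\gamma)$. The key combinatorial point is that, for a fixed TAG $(\Gamma,\mu)$, the totally assigned subgraphs $(\gamma,\nu)$ are in bijection with the underlying subgraphs $\gamma\subseteq\Gamma$: each subset of edges of $\Gamma$ determines a unique subgraph $\gamma$, and the induced order $\nu$ is then forced by $\mu$. Hence the sum over $(\gamma,\nu)\subseteq(\Gamma,\mu)$ becomes exactly a sum over $\gamma\subseteq\Gamma$, with no multiplicities introduced or lost, and the term is $\gamma\otimes\Gamma/\gamma$, which is precisely a term of $\Delta^c(\Gamma)$. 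So $(\pi\otimes\pi)\circ\Delta=\Delta^c\circ\pi$. Compatibility with the counits is trivial: $\epsilon^c\circ\pi=\epsilon$ since both detect emptiness of the underlying graph, which is equivalent to emptiness of the edge set and hence to being $1_\H$.

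Finally, a bialgebra morphism between two Hopf algebras automatically commutes with the antipodes (by uniqueness of the antipode, or equivalently because $\pi\circ S$ and $S^c\circ\pi$ are both convolution inverses of $\pi$ in $\mathrm{Hom}(\H,\H^c)$), so nothing further needs to be checked; one may remark this for completeness. I do not expect a genuine obstacle here: the only slightly delicate point is making sure the edge-subset-to-subgraph correspondence is really a bijection respecting the two coproduct conventions, in particular that $\H^c$'s coproduct is indeed indexed by subgraphs in the same sense (subset of edges together with their endpoints) so that the forgetful map does not collapse distinct terms or miss any. Once that bookkeeping is spelled out, the proof is a one-line verification on generators for each structure map.
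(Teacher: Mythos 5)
Your proof is correct and follows the same route as the paper, which simply states that the result ``directly follows from the definitions''; your write-up supplies exactly the details that remark compresses, the essential one being that the induced order $\nu$ is determined by $\gamma$ and $\mu$, so the sum over totally assigned subgraphs in \eqref{eq:coprod} maps term-by-term bijectively onto the sum over subgraphs in $\Delta^c$. The remaining checks (algebra morphism, counits, automatic compatibility with antipodes) are standard and correctly handled.
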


\begin{proof}
This statement directly follows from the definitions.
\end{proof}

\bigskip
A further non-commutative Hopf algebra of TAGs can be defined
when considering only graphs of a given quantum field theoretical model 
and defining the coproduct as the appropriate sum on 
the class of superficially divergent graphs (see for example \cite{KRT12},
where such a Hopf algebra was defined, in a commutative setting).\\

Let us end this paper by the following concluding remarks. The non-commutative graph 
Hopf algebraic structure defined here is a combinatorial Hopf algebra (CHA) using 
 a selection/contraction coproduct rule - this type of CHAs being called CHAs of type I in 
\cite{HDR}. 
Examples of such CHAs are the Connes-Kreimer Hopf algebras of QFT \cite{CK98, CK00, CK01}, 
of non-commutative Moyal QFT \cite{moyal2, moyal1}, of quantum gravity spin-foam models 
\cite{sf1, sf2}, of random tensor models \cite{tens}, or the word Hopf algebra WMat \cite{WMat}.
This type of coproduct rule is fundamentally different of the 
selection/deletion rule used in CHAs such as FQSym, MQSym, the Loday-Ronco Hopf algebra... - CHAs of type II 
(see again \cite{HDR}).
It seems however interesting to us to investigate in what circumstances one can find some 
non-trivial mathematical 
relations between these two types of CHAs.

\ignore{
\section{Getting a Hopf algebra from a coalgebra}
\redtext{"plan B"...}
One can always taylor a non-commutative Hopf algebra of strings from the data of an ``old'' comultiplication through the following construction.\\ 
Let $R$ be a ring (unitary, commutative), $X$ an alphabet (finite or infinite, for example the set of the non-void TAG). One gives a comultiplication on $RX$
\begin{equation}
	\Delta_{old}^+(x)=\sum_{y,z\in X} \gamma_x^{y,z} y\ot z
\end{equation}
One extends this $\Delta_{old}^+$ to $KX\oplus K1_{X^*}$ by  
\begin{equation}
	\Delta_{old}(x)=x\ot 1+1\ot x+\sum_{y,z\in X} \gamma_x^{y,z} y\ot z\ ;\ \Delta_{old}(1)=1\ot 1
\end{equation}
One has then the following proposition 
\begin{proposition} Soit $\H=\ncp{R}{X}$, the free algebra of non-commutative polynomials. One extends 
\begin{equation}
\Delta_{old}\ :\  X\to \ncp{R}{X}\ot \ncp{R}{X}	
\end{equation}
as a morphism $\Delta_{new}\ :\  \ncp{R}{X}\to \ncp{R}{X}\ot \ncp{R}{X}$ (where $\ncp{R}{X}\ot \ncp{R}{X}$ is endowed with the product structure). Then : 
i) $\Delta_{old}^+$ is coassociative iff $\Delta_{new}$ is.\\
ii) One denotes $\ep$, on $\ncp{R}{X}$, the usual ``constant term'' character  $\ep(P)=\scal{P}{1_{X^*}}$, then $\B=(\ncp{K}{X},\mu,e_\B,\Delta,\epsilon)$ is a bialgebra.\\
iii) If, in addition, $\Delta_{old}^+$ is nilpotent, then $\B$ is a Hopf algebra.
\end{proposition}

\begin{proof}
This construction can be found with few details in Florent Hivert's PhD \cite{fh} and the proofs can be alternatively done through a suitable ``dualisation'' from \cite{f-sh}. It stems directly from the following diagram:
\begin{figure}[h]
\centering
\begin{tikzpicture}
  \matrix (m) [matrix of math nodes,row sep=3em,column sep=8em,minimum width=2em]
  {X & (KX\oplus K1_{X^*})\ot (KX\oplus K1_{X^*}) & \ncp{K}{X}\ot \ncp{K}{X}\\
\ncp{K}{X}&       &\\
};
\path[right hook->] 
    (m-1-1) edge [] node [above] {$\Delta_{old}$} (m-1-2)
            edge [] node [left] {$i_X$} (m-2-1)
    (m-1-2) edge [] node [above] {$i_T\ot i_T$} (m-1-3); 
\path[-stealth]             
    (m-2-1) edge node [below] {$\Delta_{new}$}  (m-1-3);
\end{tikzpicture}
\caption{The free algebra $\ncp{K}{X}$ as a bi-algebra}\label{free_bialg}
\end{figure}
\end{proof}
}


\bigskip

\section*{Acknowledgements}
G. Duchamp, N. Hoang-Nghia and A. Tanasa  acknowledge the "Combinatoire  alg\'ebrique" 
Univ. Paris 13, Sorbonne Paris Cit\'e BQR  grant.
A. Tanasa further acknowledges
the grants PN 09 37 01 02 and CNCSIS Tinere Echipe 77/04.08.2010.

\addcontentsline{toc}{section}{References}
\bibliographystyle{plain}

\begin{thebibliography}{000}

\bibitem{BG1} G. Benfatto and G. Gallavotti
Perturbation theory of the Fermi surface
in a quantum liquid. A general quasi-particle formalism
and one dimensional systems, {\it Journ. Stat. Physics}
{\bf 59}, 541, 1990.





\bibitem{BGR} 
  J.~Ben Geloun and V.~Rivasseau,
 \textsl{A Renormalizable 4-Dimensional Tensor Field Theory},
Commun.\ Math.\ Phys.\  {\bf 318}, 69 (2013)
  arXiv:1111.4997 [hep-th].


\bibitem{Bor26}
O. Boruvka, \textsl{On a minimal problem},
Pr\'ace Moravsk\'e Pridovedeck\'e Spolecnosti, 3 (1926).

\bibitem{Brou00}
Ch. Brouder,
\textsl{Runge-Kutta methods and renormalization},
Eur. Phys. J. \textbf{12} (2000), 521-534.

\bibitem{B72}
 J. C. Butcher,
\textsl{An algebraic theory of integration methods}
, Math. Comp. \textbf{26} (1972), 79–106

\bibitem{COR} 
  S.~Carrozza, D.~Oriti and V.~Rivasseau,
 \textsl{Renormalization of Tensorial Group Field Theories: Abelian U(1) Models in Four Dimensions},
  arXiv:1207.6734 [hep-th].

\bibitem{CK98}A. Connes, D. Kreimer, 
\textsl{Hopf algebras, Renormalisation and Noncommutative Geometry},
Comm. Math. Phys. {\bf 199} (1998), 203-242.

\bibitem{CK00}A. Connes, D. Kreimer,
\textsl{Renormalization in
    Quantum Field Theory and the Riemann-Hilbert problem I: the Hopf algebra
    structure of graphs and the main theorem},
Comm. Math. Phys. {\bf 210}  (2000), 249-273.

\bibitem{CK01}A. Connes, D. Kreimer,
\textsl{Renormalization in
    Quantum Field Theory and the Riemann-Hilbert problem II: the
    $\beta$-function, diffeomorphisms and the renormalization group},
  Comm. Math. Phys. {\bf 216} (2001), 215-241.

\bibitem{f-sh} G.H.E. Duchamp, V. Hoang Ngoc Minh, C. Tollu, B\`ui Chi\^en, Nguyen Hoang Nghia, {\it Combinatorics of $\phi$-deformed stuffle Hopf algebras}, hal-00793118, version 4, p20 th 2 (iii).



\bibitem{WMat}
  G.~H.~E.~Duchamp, N.~Hoang-Nghia and A.~Tanasa,
``A word Hopf algebra based on the selection/quotient principle,''
  Seminaire Lotharingien de Combinatoire, B {\bf 68c} (2013)
  [arXiv:1207.6522 [math.CO]].



\bibitem{D86}
A. D\"ur,
\textsl{M\" obius functions, incidence algebras and power series representations},
Lecture Notes in Mathematics \textbf{1202}, Springer-Verlag, Berlin (1986).

\bibitem{FT1} J. Feldman and E. Trubowitz,
\textsl{Perturbation Theory for Many Fermions Systems}, 
{\it Helv. Phys. Acta} {\bf 63}, 156 (1990).

\bibitem{Foi02} L. Foissy,
\textsl{Les alg\`ebres de Hopf des arbres enracin\'es d\'ecor\'es, I},
Bull. Sci. math. 126 (2002) 193-239.

\bibitem{GMRT}
  R.~Gurau, J.~Magnen, V.~Rivasseau and A.~Tanasa,
\textsl{A Translation-invariant renormalizable non-commutative scalar model},
  Commun.\ Math.\ Phys.\  {\bf 287} (2009) 275
  [arXiv:0802.0791 [math-ph]].

\bibitem{4men}
R.~Gurau, J.~Magnen, V.~Rivasseau and F.~Vignes-Tourneret,
  \textsl{Renormalization of non-commutative phi(4)**4 field theory in x space},
  Commun.\ Math.\ Phys.\  {\bf 267} (2006) 515
  [hep-th/0512271].

\bibitem{propa}
  R.~Gurau, V.~Rivasseau and F.~Vignes-Tourneret,
 \textsl{Propagators for noncommutative field theories},
  Annales Henri Poincare {\bf 7} (2006) 1601
  [hep-th/0512071].

\bibitem{fh} Florent Hivert, th\`ese, Universit\'e de Marne-la-Vall\'ee. 

\bibitem{H03}
R. Holtkamp,
\textsl{Comparison of Hopf algebras on trees},
Archiv der Mathematik \textbf{80} (2003), 368-383.

\bibitem{KRT12} T. Krajewski, V. Rivasseau and A. Tanasa, 
\textsl{Combinatorial Hopf algebraic description of the multiscale renormalization in quantum field theory},  
arXiv:1211.4429.

\bibitem{K98}D. Kreimer,
\textsl{On the Hopf algebra structure of perturbative quantum field theories},
Adv. Theor. Math. Phys. {\bf 2} (1998).


\bibitem{Kru55} J. B. Kruskal, jr. 
\textsl{On the shortest spanning tree of a graph and the travelling salesman problem},
Proceedings of the American Mathematical Society, 7 (1956), 48-50.


\bibitem{LR} J.-L. Loday and M. Ronco, 
\textsl{Combinatorial Hopf Algebras}, 
Clay Math. Proc. \textbf{10} (2008).

\bibitem{Lo} M. Lothaire, 
\textsl{Combinatorics on words}, 
Cambridge (1983).



\bibitem{sf1}
 F.~Markopoulou,
``Coarse graining in spin foam models,''
  Class.\ Quant.\ Grav.\  {\bf 20}, 777 (2003)
  [gr-qc/0203036].


\bibitem{tens}
 M.~Raasakka and A.~Tanasa,
``Combinatorial Hopf algebra for the Ben Geloun-Rivasseau tensor field theory,''
  arXiv:1306.1022 [gr-qc], submitted. 

\bibitem{book-rivasseau}
V. Rivasseau,
\textsl{From perturbative to constructive renormalization},
Princeton University Press (1991).

\bibitem{Rivasseau:2011ri} 
  V.~Rivasseau,
  \textsl{Introduction to the Renormalization Group with Applications to Non-Relativistic Quantum Electron Gases},
  Lect.\ Notes Math.\  {\bf 2051}, 1 (2012)
  [arXiv:1102.5117 [math-ph]].

\bibitem{GW}
V.~Rivasseau, F.~Vignes-Tourneret and R.~Wulkenhaar,
 \textsl{Renormalization of noncommutative phi**4-theory by multi-scale analysis},
  Commun.\ Math.\ Phys.\  {\bf 262} (2006) 565
  [hep-th/0501036].



\bibitem{sf2}
A.~Tanasa,
 ``Algebraic structures in quantum gravity,''
  Class.\ Quant.\ Grav.\  {\bf 27}, 095008 (2010)
  [arXiv:0909.5631 [gr-qc]].

\bibitem{HDR}
A. Tanasa, ``Combinatorics in quantum field theory and random tensor models'', Habilitation,
Univ. Paris 13, Sorbonne Paris Cit\'e, Nov. 2012.

\bibitem{moyal2}
A.~Tanasa and D.~Kreimer,
 ``Combinatorial Dyson-Schwinger equations in noncommutative field theory,''
  Journal of Noncommutative Geometry {\bf 7}, 255 (2013)
  [arXiv:0907.2182 [hep-th]].

\bibitem{moyal1}
 A.~Tanasa and F.~Vignes-Tourneret,
 ``Hopf algebra of non-commutative field theory,''
  Journal of Noncommutative Geometry {\bf 2}, 125 (2008)
  [arXiv:0707.4143 [math-ph]].



\bibitem{gn}
F.~Vignes-Tourneret,
\textsl{Renormalization of the Orientable Non-commutative Gross-Neveu Model},
  Annales Henri Poincare {\bf 8} (2007) 427
  [math-ph/0606069].






\end{thebibliography}

\vspace{2cm}

\noindent
{\small ${}^{a}${\it Universit\'e Paris 13, Sorbonne Paris Cit\'e, 99, avenue Jean-Baptiste Cl\'ement \\
LIPN, Institut Galil\'ee, 
CNRS UMR 7030, F-93430, Villetaneuse, France, EU}}\\
{\small ${}^{b}${\it Universit\'e du Littoral, LMPA Joseph Liouville, 50, rue Ferdinand Buisson
CS 80699 - 62228 Calais Cedex, France, EU }}\\
{\small ${}^{c}${\it Universit\'e Blaise Pascal - Laboratoire de Math\'ematiques,
CNRS UMR 6620, Campus des C\'ezeaux - BP 80026, F63171 Aubi\`ere Cedex, France, EU }}\\
{\small ${}^{d}${\it 
Horia Hulubei National Institute for Physics and Nuclear Engineering,\\
P.O.B. MG-6, 077125 Magurele, Romania, EU}}\\
\end{document}